\pgfplotsset{compat=newest, colormap/blackwhite}
\newcommand{\N}{\mathbb{N}}
\newcommand{\R}{\mathbb{R}}
\newcommand{\Cr}{\mathscr{C}}
\newcommand{\Dr}{\mathscr{D}}
\newcommand{\CAD}{\text{CAD}}
\newcommand{\id}{\text{id}}
\newcommand{\sign}{\text{sign}}
\newcommand{\cornet}{\mathfrak{C}}
\newcommandx{\unsure}[2][1=]{\todo[linecolor=red,backgroundcolor=red!25,bordercolor=red,#1]{#2}}
\newcommandx{\change}[2][1=]{\todo[linecolor=blue,backgroundcolor=blue!25,bordercolor=blue,#1]{#2}}
\newcommandx{\info}[2][1=]{\todo[linecolor=OliveGreen,backgroundcolor=OliveGreen!25,bordercolor=OliveGreen,#1]{#2}}
\newcommandx{\improvement}[2][1=]{\todo[linecolor=Plum,backgroundcolor=Plum!25,bordercolor=Plum,#1]{#2}}
\newcommandx{\lucas}[2][1=]{\todo[linecolor=ForestGreen,backgroundcolor=ForestGreen!25,bordercolor=ForestGreen,#1]{#2}}
\newcommandx{\pierre}[2][1=]{\todo[linecolor=Blue,backgroundcolor=Blue!25,bordercolor=Blue,#1]{#2}}
\newtheorem{theorem}{Theorem}[section] 
\newtheorem{lemma}[theorem]{Lemma}     
\newtheorem{proposition}[theorem]{Proposition}
\title[On some Exotic CADs and Cells]
 {On some Exotic Cylindrical Algebraic Decompositions and Cells} 
\author{Lucas Michel}
\begin{document}
\maketitle

\begin{abstract}
Cylindrical Algebraic Decompositions (CADs) endowed with additional topological properties have found applications beyond their original logical setting, including algorithmic optimizations in CAD construction, robot motion planning, and the algorithmic study of the topology of semi-algebraic sets. 
In this paper, we construct explicit examples of CADs and CAD cells that refute several conjectures and open questions of J. H. Davenport, A. Locatelli, and G. K. Sankaran concerning these topological assumptions.

\end{abstract}

\section{Introduction}
\noindent Cylindrical Algebraic Decomposition (CAD) is a cornerstone tool in real algebraic geometry and underlies classical algorithms for Quantifier Elimination over real closed fields (QE) \cite{collins1975}. 
Beyond its foundational role in QE, CADs with strong topological properties have proved useful in various contexts. 
For instance, they play a role in algorithmic optimizations within CAD construction itself (see the clustering method  \cite{Arnon}, \cite{ARNON1988}), visualization of semi-algebraic sets, in applications such as the Piano Mover’s Problem (see \cite{piano}), and in the study of the topology of semi-algebraic sets (see for instance \cite{DLSregular} or \cite{lazard2010}).

In this setting, several natural topological assumptions on CADs and their cells (such as being closure-finite, well-bordered, or locally boundary connected, ...) frequently appear as convenient working hypotheses. 
These properties concern, in particular, CAD cells adjacencies, the behaviour of the boundaries of CAD cells and the way cells are arranged within the decomposition. 
A deeper analysis of these conditions is not only of theoretical interest but also of practical relevance: if a CAD (or a CAD cell) is known to satisfy certain topological properties, then some heavy computations can be safely avoided when designing or implementing algorithms.
However, despite their frequent use, a precise understanding of these topological properties and their interrelations remains incomplete in the literature.

From a topological point of view, a CAD cell of $\mathbb{R}^n$ is rather simple, since it is a topological ball (also called a cell), that is, a singleton or a set homeomorphic to an open ball of some dimension less than or equal to $n$ (see, for instance, Proposition 5.3 of \cite{Basu}). Note that this number is the dimension of the considered CAD cell. 
However, when considering its boundary, it is common to visualize a regular CAD cell (see below) in $\mathbb{R}^2$ or $\mathbb{R}^3$ and to assume that its closure behaves nicely. 
Similarly, one usually illustrates CADs of $\mathbb{R}^2$ or $\mathbb{R}^3$ with strong topological properties (for instance, a closure-finite and well-bordered CAD). 
In such examples, the closure of each cell can typically be expressed as a union of cells of the same CAD, and the dimension of the boundary decreases by exactly one compared to that of the original cell.
This intuition is misleading: the topology of the boundary of CAD cells, and their arrangement within their ambient CAD can be more intricate (especially when $n$ is large) as we will see below. 

In this paper, we construct explicit examples of CADs and CAD cells in $\R^4$ that challenge or refute some conjectures and open questions related to these topological aspects (see \cite{DLSregular} and the thesis \cite{locatelli}).
In Section \ref{sec:CFvsWB}, we construct a family of CADs in $\mathbb{R}^4$ that are closure-finite but not well-bordered. 
In Section \ref{sec:ExoticCells}, we construct exotic examples of CAD cells in $\mathbb{R}^4$ that exhibit topological phenomena that do not appear for CAD cells in lower-dimensional Euclidean spaces.

\subsection{Notation and framework}

For every positive integer $n \in \mathbb{N}^*$, we consider CADs of $\mathbb{R}^n$ with respect to a fixed variable ordering. Since the cells of such CADs are defined inductively from cells of CADs of $\mathbb{R}^k$ ($k \leq n$) which are indexed by $k$-tuples, it is convenient to adopt the tuple notation of \cite{miniCAD} and use the $\odot$ shorthand of \cite{binyamini2019complex} to deal easily with sectors and sections. 

More precisely, we identify any $k$-tuple $I=(i_1,\ldots,i_k)\in\mathbb{N}^k$ with the corresponding word $i_1\ldots i_k$. We say that $I$ is odd (resp. even) if $i_k$ is odd (resp. even). We denote by $\varepsilon$ the empty tuple, which corresponds to the empty word. For $j\in \mathbb{N}$, we denote by $I:j$ the $k+1$ tuple $(i_1,\ldots,i_k,j)$. For every subset $S \subseteq \R^n$, we denote (following the notation of \cite{bochnaketal1998}) by $\mathcal{S}^0(S)$ the ring of continuous semi-algebraic functions from $S$ to $\R$.
For $f \in \mathcal{S}^0(S)$, the section $S \odot \{f\}$ with base $S$ and bound $f$ is simply the graph of $f$, that is
    $$S \odot \{f\} = \{(\textbf{x},y)\in\R^{n+1} \; | \; \textbf{x} \in S,  y = f(\textbf{x})\}.$$
For $l \in \mathcal{S}^0(S) \cup \{-\infty\}$ and $u \in \mathcal{S}^0(S) \cup \{+ \infty\}$ such that $l < u$ on $S$, the sector $S \odot (l,u)$ with base $S$, lower bound $l$ and upper bound $u$, is the set
    $$S \odot (l,u) = \{(\textbf{x},y)\in\R^{n+1} \; | \; \textbf{x} \in S, l(\textbf{x}) < y < u(\textbf{x})\}.$$ 
    
Similarly to \cite{DLSregular}, we denote by $\overline{S}$ the closure of $S$ in the Euclidean topology, and by $\partial S = \overline{S} \setminus S$ its (cell) boundary. 
Note that this should not be confused with the usual topological frontier of $S$, which is defined as the closure of $S$ minus the interior of $S$.  
Recall that $S$ is locally boundary connected if, for every $p \in \partial S$, there exists $\delta > 0$ such that, for all $\varepsilon \in (0, \delta)$, the intersection $\mathbb{B}(p, \varepsilon) \cap S$ is connected, where $\mathbb{B}(p, \varepsilon)$ denotes the open ball in $\R^n$ of radius $\varepsilon$ and centred at~$p$.

\begin{definition}\label{def:cad}
    A cylindrical algebraic decomposition ($\CAD$) of $\mathbb{R}^n$ is a sequence $\mathscr{C} = (\mathscr{C}_1,\ldots,\mathscr{C}_n)$ such that  
    for all $k \in \{1,\ldots,n\}$, the set  $$\mathscr{C}_k= \left\{C_{i_1 \cdots i_k} |  \forall j \in \{1,\ldots,k\}, i_j \in \{1,\ldots,2u_{i_1\cdots i_{j-1}} +1\}\right\}$$
     is a finite semi-algebraic partition of $\mathbb{R}^k$  defined inductively by the following data:
    \begin{enumerate}
        \item there exist a natural number $u_\varepsilon \in \mathbb{N}$ and real algebraic numbers $\xi_{2} < \xi_{4} < \cdots <\xi_{2u_\varepsilon}$ (possibly none if $u_\varepsilon = 0$)
        that define exactly all cells of $\mathscr{C}_{1}$ by
        \begin{align*}
            C_{2j} &= \{\xi_{2j}\},\quad &(1 \leq j \leq u_\varepsilon)\\
             C_{2j+1}&= (\xi_{2j}, \xi_{2(j+1)}),\quad &(0 \leq j \leq u_\varepsilon)
        \end{align*}
         with the convention that $\xi_{0} = -\infty$ and $\xi_{2u_\varepsilon+ 2} = +\infty$;
        \item for each cell $C_I \in \mathscr{C}_k$ ($k < n$), there exist a natural number $u_I \in \mathbb{N}$ and $\xi_{I: 2 },\xi_{I:4},\ldots,\xi_{I:2 u_I} \in \mathcal{S}^0(C_I)$ (possibly none if $u_I = 0$) with
         $\xi_{I: 2 } < \xi_{I:4} < \ldots <\xi_{I:2 u_I}$ on $C_I$, that define exactly all cells of $\mathscr{C}_{k+1}$ by
        {\small\begin{align*}
             C_{I:2j} &= C_I \odot \{\xi_{I:2j}\} , \quad &(1 \leq j \leq u_I)\\
             C_{I:2j+1} &= C_I \odot (\xi_{I:2j},\xi_{I:2(j+1)}),\quad &(0 \leq j \leq u_I)
         \end{align*}}
         with the convention that $\xi_{I:0} = -\infty$ and $\xi_{I:2u_I 
+ 2} = +\infty$.
    \end{enumerate}
 We say that the element $C_{I}$ of $\mathscr{C}_k$ is a CAD cell of index $I$.  
\end{definition}

For all $k \in \{1, \ldots, n\}$, we denote by $\pi_k : \R^n \to \R^{k}$ the projection onto the first $k$ coordinates, defined by $\pi_k(x_1, \ldots, x_n) = (x_1, \ldots, x_k)$ , and use the same notation for its natural successive extensions to subsets of $\R^n$ and to families of subsets of $\R^n$.
In particular, for every $\CAD$ $\Cr$ of $\R^n$, we have $\pi_k(\Cr_n) = \Cr_k$ for every $ k\in \{1, \ldots, n\}$. For this reason, we usually identify $\Cr$ with $\Cr_n$ when the context is clear.

In general, one can refer to a CAD cell $C$ of $\R^n$ without referring to an ambient CAD, nor its index. If $n = 1$, then $C$ is either a singleton, or an open interval (with real-algebraic bounds). If $n > 1$, then $\pi_{n-1}(C)$ is a CAD cell of $\R^{n-1}$, and $C$ is a section or a sector with base $\pi_{n-1}(C)$ as defined above.

\section{Closure-finiteness and well-borderedness of CADs}\label{sec:CFvsWB}

In this section, we provide (via Theorem \ref{thrm:CFnotWB}) a negative answer to Question 3.1.2 of \cite{DLSregular}. 
We begin by recalling the relevant definitions and briefly reviewing the current state of the art.
\begin{definition} Let $\Dr$ be a finite partition of $\R^n$ such that each of its elements is a topological ball (as defined above). We say that $\Dr$ is 
    \begin{itemize}
        \item closure finite (CF) if for all $D \in \Dr$, there exist $D_1, \ldots, D_K \in \Dr$ such that $\overline{D} = \bigcup_{k=1}^K D_k$.
        \item well-bordered (WB) if for all $D \in \Dr$ of dimension $d \in \N^*$, there exist $D_1, \ldots, D_K \in \Dr$ of dimension $d - 1$ such that $\partial D = \bigcup_{k=1}^K \overline{D_k}$.
    \end{itemize}
\end{definition}

Decompositions satisfying the first condition are also called boundary coherent \cite{lazard2010}, or said to satisfy the frontier condition \cite{baker}. 
The second property is called boundary smooth in \cite{lazard2010}. 
Here we adopt the terminology of \cite{DLSregular}.

Corollary 2.2.18 of \cite{locatelli} asserts that, in general, the well-borderedness property is stronger than the closure finiteness property, i.e. if $\Dr$ is WB, then $\Dr$ is CF. In very low dimension, namely when $n \in \{1,2\}$, Corollary 2.2.20  of \cite{locatelli} guarantees that these two properties are equivalent. Moreover, Proposition 3.5.1 of \cite{locatelli} shows that if $n = 3$ and if $\Dr$ is a CAD, then these two properties are again equivalent. Conjecture 7.2.1 of \cite{locatelli} asserts that this fact is still true if we drop the assumption on the ambient dimension $n$. Theorem \ref{thrm:CFnotWB} below provides a counterexample, showing that the conjecture is false as stated.

\begin{theorem}\label{thrm:CFnotWB} \normalfont 
    There exist an infinite number of CADs of $\R^4$ that are closure finite but not well-bordered.
\end{theorem}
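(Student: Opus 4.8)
The plan is to construct a single CAD $\Cr$ of $\R^4$ containing one pathological $3$-dimensional cell $D$, to prove that $\Cr$ is CF while $D$ violates the well-borderedness condition, and finally to produce infinitely many such CADs by an inessential variation. The decisive design choice is that $D$ will be a \emph{section} rather than a sector. Indeed, for a sector $B\odot(l,u)$ every boundary face approaches the fibre from a definite limiting direction along which $l<u$ still brackets the fibre, so the open gap between the two bounds can never be left uncovered by a codimension-one cell; by contrast, for a section the single bounding function may have direction-dependent limits on the boundary of its base, and it is exactly this fattening of the closure of a graph over a low-dimensional base cell that I will exploit.

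Concretely, I would take the open $3$-cell $B=(0,\infty)\odot(0,\infty)\odot(0,1)\subset\R^3$, chosen so that $\mathbf q=(0,0,0)$ is a $0$-cell of $\Cr_3$, and set $D=B\odot\{f\}$ with $f(\mathbf x)=\frac{x_1x_2x_3}{(x_1^2+x_2^2+x_3^2)^{3/2}}\in\mathcal S^0(B)$ (a suitable algebraic section with the same features may be substituted if one insists on polynomial data, but the Definition above already permits continuous semi-algebraic bounds). The key facts to record are that $f$ is homogeneous of degree $0$, continuous on $\R^3\setminus\{0\}$, strictly positive on $B$, identically zero on each coordinate hyperplane, and attains its maximum $M=\tfrac{1}{3\sqrt3}$ only along the interior direction $\tfrac{1}{\sqrt3}(1,1,1)$. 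Consequently, over any point of an edge of $\overline B$ the approach direction tends to a coordinate axis, so $f\to 0$ and the fibre of $\overline D$ is a single point; over the corner $\mathbf q$, however, the directional limits of $f$ fill the whole interval $[0,M]$, so that $\overline D$ fattens there to the segment $G=\mathbf q\odot(0,M)$, a $1$-cell contained in $\partial D$.

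To establish closure-finiteness I would set up the $x_4$-stacks so that the fattened cells, namely $G$ together with its endpoint $0$-cells $\mathbf q\odot\{0\}$ and $\mathbf q\odot\{M\}$, are genuine cells of $\Cr$; then $\overline D$ is visibly a union of cells, and the remaining closures (those of the base cells and of the zero-sections over the coordinate faces) are graphs of continuous functions or standard sectors and are unions of cells by design. To disprove well-borderedness I would enumerate the $2$-cells of $\partial D$ whose closures reach $\mathbf q$: these are exactly the graph-sections over the three coordinate $2$-faces of $B$, and since $f$ vanishes on each of these faces, their closures meet $G$ only at the endpoint $x_4=0$; moreover no sector occurs over any edge of $\overline B$, precisely because the edges do not fatten. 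Hence the interior of $G\subset\partial D$ lies in the closure of no $2$-cell of $\partial D$, so $\partial D$ cannot be written as a union of closures of $2$-cells and $\Cr$ is not WB. Infinitely many examples then follow at once, for instance by replacing $f$ with $f/m$ for $m\in\N^*$, or by inserting $m$ harmless extra breakpoints in a region disjoint from $D$.

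The main obstacle I anticipate is twofold. First, verifying global CF demands checking that \emph{every} cell's closure — not just $\overline D$ — is a union of cells, which forces a careful, if routine, bookkeeping of all stacks so that each fattening locus is itself subdivided into cells. Second, and more delicate, is the exhaustive verification that \emph{no} codimension-one cell's closure meets the interior of $G$; this is where the geometry is subtle, since it rests on the fact that the fattening of $\overline D$ is concentrated at the codimension-$3$ corner $\mathbf q$ while vanishing along every codimension-$1$ face. That concentration is possible only because the maximum of $f$ is realized at an \emph{interior} direction of the solid angle at $\mathbf q$, a configuration that requires the base to possess a point of codimension $3$, i.e. $n\ge 4$. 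This is the conceptual heart of the counterexample and explains why Locatelli's equivalence of CF and WB persists for $n\le 3$ (see \cite{locatelli}) yet breaks at $n=4$.
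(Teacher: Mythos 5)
Your proposal is correct, and it exploits the same underlying mechanism as the paper: a $3$-dimensional \emph{section} in $\R^4$ whose closure fattens, over a $0$-dimensional cell of the base CAD, into a $1$-dimensional set; one then subdivides the $x_4$-stack over that $0$-cell at the endpoints of the fattened segment so that closure-finiteness survives, while well-borderedness fails because that segment cannot be covered by closures of $2$-dimensional cells. The difference lies in the concrete decomposition, and it has real consequences for the length of the verification. The paper (generalizing Lazard's example) works over the half-space $C_{311}=(0,\infty)\times\R^2$ with a bound such as $g(x_1,x_2,x_3)=-(x_2^2+x_3^2)/x_1$, which tends to $-\infty$ at every boundary point of the base other than the origin; hence the closure of the section adds \emph{only} a half-line, the entire CAD has eleven cells, and non-well-borderedness is immediate because $\partial C_{3112}$ is itself $1$-dimensional and so cannot equal any union of closures of $2$-cells. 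Your $f(\mathbf{x})=x_1x_2x_3/|\mathbf{x}|^{3}$ instead extends continuously to all of $\partial B\setminus\{\mathbf{q}\}$ (by $0$ on the coordinate faces, by a positive function on the face $x_3=1$), so $\overline{D}$ also contains sections over every face and edge of $\overline{B}$. This is exactly why your bookkeeping is heavier: you must insert breaks (at $0$, at $M$, and at the extension of $f$ over the top face) in the stacks over roughly a dozen base cells, you must compute the closures of the two sectors flanking $D$ by hand (the paper dispatches these with Lemma~\ref{lemma:consecutive}; in your setting the fibres over $\mathbf{q}$ are $(-\infty,M]$ and $[0,\infty)$, which the breaks at $0$ and $M$ do handle), and your well-borderedness argument needs the extra step, which you correctly supply, that the only $2$-cells contained in $\partial D$ are the face-sections and that none of their closures meets the interior of $G$. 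Both arguments obtain infinitude the same cheap way (a free parameter: the shift $s$ in the paper, your $f/m$ or extra breakpoints). In short: same idea, genuinely different instantiation; the paper's choice of a bound diverging to $-\infty$ away from the fattening locus is precisely what collapses the verification to a page, whereas your version is sound but would be considerably longer to write out in full.
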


The remaining of this section is devoted to the proof of this result that invalidates Conjecture 7.2.1 of \cite{locatelli}, and gives a negative answer to Question 3.1.2 of \cite{DLSregular}.
The main idea of the proof is to generalize an example given by D. Lazard (see Example 2.11 of \cite{lazard2010}) of a CAD in $\mathbb{R}^4$ that is not well-bordered, and to refine it appropriately so as to preserve this property while also ensuring closure-finiteness. 

The following auxiliary lemma will be applied at the end of the proof to analyse the closure of some CAD cells. 

\begin{lemma}\label{lemma:consecutive} \normalfont 
        Let $S$ be a semi-algebraic set of $\R^{n}$ and consider the sectors and the section
        \begin{align*}
            A = S \odot (f_1,f_2), B = S \odot \{f_2\}, C = S \odot (f_2,f_3).
        \end{align*}
        We have $\overline{B} \subseteq \overline{A} \cap \overline{C}$. Moreover, if the sector $A \cup B \cup C = S \odot (f_1, f_3)$ is locally boundary connected, then $\overline{B} = \overline{A} \cap \overline{C}.$
    \end{lemma}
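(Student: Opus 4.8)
The plan is to prove the two inclusions separately, since the first is unconditional and routine, while the whole content of the lemma lies in the reverse inclusion under the local boundary connectedness hypothesis.

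For $\overline{B} \subseteq \overline{A} \cap \overline{C}$, I would first show $B \subseteq \overline{A}$ and $B \subseteq \overline{C}$; as $\overline{A}\cap\overline{C}$ is closed and contains $B$, it then contains $\overline{B}$. To see $B \subseteq \overline{A}$, fix $(\mathbf{x}, f_2(\mathbf{x})) \in B$ with $\mathbf{x}\in S$; since $f_1(\mathbf{x}) < f_2(\mathbf{x})$, any sequence $y_k \uparrow f_2(\mathbf{x})$ with $f_1(\mathbf{x}) < y_k$ gives points $(\mathbf{x}, y_k) \in A$ converging to $(\mathbf{x}, f_2(\mathbf{x}))$. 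The inclusion $B \subseteq \overline{C}$ is symmetric, using $y_k \downarrow f_2(\mathbf{x})$ and $f_2(\mathbf{x}) < f_3(\mathbf{x})$.

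For the reverse inclusion, set $D = A \cup B \cup C = S \odot (f_1, f_3)$ and observe that $g(\mathbf{x}, y) = y - f_2(\mathbf{x})$ is continuous on $D$ (because $f_2 \in \mathcal{S}^0(S)$), with $A = \{g < 0\}$, $C = \{g > 0\}$ and $B = \{g = 0\}$; in particular $A$ and $C$ are open in $D$, a fact I will need below. Now take $p = (\mathbf{x}_0, y_0) \in \overline{A} \cap \overline{C}$. Since $\overline{A} \subseteq \overline{S} \times \R$, we have $\mathbf{x}_0 \in \overline{S}$, so either $\mathbf{x}_0 \in S$ or $\mathbf{x}_0 \in \partial S$. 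If $\mathbf{x}_0 \in S$, taking a sequence in $A$ converging to $p$ and using continuity of $f_2$ on $S$ yields $y_0 \leq f_2(\mathbf{x}_0)$, while a sequence in $C$ yields $y_0 \geq f_2(\mathbf{x}_0)$; hence $y_0 = f_2(\mathbf{x}_0)$ and $p \in B \subseteq \overline{B}$.

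The main obstacle is the remaining case $\mathbf{x}_0 \in \partial S$, and this is exactly where I expect the local boundary connectedness hypothesis to do the work. Here $p \in \overline{D} \setminus D = \partial D$, so there is $\delta > 0$ with $\mathbb{B}(p, \varepsilon) \cap D$ connected for all $\varepsilon \in (0, \delta)$. Arguing by contradiction, suppose $p \notin \overline{B}$, so $\mathbb{B}(p, \varepsilon_0) \cap B = \emptyset$ for some $\varepsilon_0 > 0$. Choosing $\varepsilon < \min(\delta, \varepsilon_0)$, the set $\mathbb{B}(p, \varepsilon) \cap D$ is the disjoint union $\bigl(\mathbb{B}(p, \varepsilon) \cap A\bigr) \cup \bigl(\mathbb{B}(p, \varepsilon) \cap C\bigr)$ of two sets that are relatively open in $D$ (by the openness of $A$ and $C$ noted above) and both nonempty (since $p \in \overline{A} \cap \overline{C}$). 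This contradicts the connectedness of $\mathbb{B}(p, \varepsilon) \cap D$, so $p \in \overline{B}$, which completes the proof of $\overline{A} \cap \overline{C} \subseteq \overline{B}$ and hence of equality.
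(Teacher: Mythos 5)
Your proof is correct and takes essentially the same approach as the paper: the same vertical-sequence argument for the first inclusion, the same case split on $\mathbf{x}_0 \in S$ versus $\mathbf{x}_0 \in \partial S$, and the same use of local boundary connectedness on small balls around $p$. The only difference is stylistic: the paper applies the intermediate value theorem to $(\mathbf{z}, z_n) \mapsto z_n - f_2(\mathbf{z})$ on the connected set $\mathbb{B}(p,\varepsilon) \cap D$ to directly produce points of $B$ converging to $p$, whereas you argue by contradiction that a ball missing $B$ would be partitioned into the two nonempty, relatively open sets $\mathbb{B}(p,\varepsilon)\cap A$ and $\mathbb{B}(p,\varepsilon)\cap C$ --- two equivalent formulations of the same connectedness principle.
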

    Note that this result fails without the locally boundary connectedness assumption (see Remark \ref{rem:weirdAssumption} below). 
    \begin{proof}
    We first show that $B \subseteq \overline{A} \cap \overline{C}$, which will directly imply the first desired inclusion. If $(\mathbf{x}, y) \in B$, we can explicitly construct two sequences in $A$ and $C$ that converge to $(\mathbf{x}, y)$. For instance, consider the sequences $\left((\mathbf{x}, y - 1/k)\right)_{k \geqslant N}$ and $\left((\mathbf{x}, y + 1/k)\right)_{k \geqslant N}$, for $N$ large enough so that all their terms lie in $A$ and $C$, respectively.

    We show the other inclusion with the additional hypothesis that the set $D = A \cup B \cup C$ is locally boundary connected . We consider $(\textbf{x}, y) \in \overline{A} \cap \overline{C}$ and obtain that $\textbf{x}\in \overline{S}$ by the continuity of the projection $\pi_n$. We distinguish two cases. 
    First, if $\textbf{x} \in S$, then $y \leqslant f_2(\textbf{x})$ and $y \geqslant f_2(\textbf{x})$ since $(\textbf{x}, y) \in \overline{A} \cap \overline{C}$. This shows that $(\textbf{x}, y) \in B$.
    In the other case, if $\textbf{x} \notin S$, then $\textbf{x} \in \partial S$, and hence $(\textbf{x},y) \in \partial D$. By assumption, there exists $\delta > 0$ such that for every $\varepsilon \in (0, \delta)$, the intersection $\mathbb{B}((\textbf{x},y),\varepsilon) \cap D$ is connected. Also, observe that $\mathbb{B}((\textbf{x},y),\varepsilon)$ is a neighbourhood of $(\textbf{x}, y)$, and hence $\mathbb{B}((\textbf{x},y),\varepsilon) \cap A \neq \emptyset$ and $\mathbb{B}((\textbf{x},y),\varepsilon) \cap C \neq \emptyset$.  We fix $N \in \N$ sufficiently large (such that $1/N < \delta$), and observe that for every $k \geqslant N$, the continuous function 
    $$\mathbb{B}((\textbf{x},y),1/k) \cap D \to \R : (\textbf{z}, z_n) \to z_n - f_2(\textbf{z})$$
    is strictly negative on $\mathbb{B}((\textbf{x},y),1/k) \cap A$, strictly positive on $\mathbb{B}((\textbf{x},y),1/k) \cap C$, and hence must vanish at some point $(\textbf{x}^k, y^k)$ of the connected set $\mathbb{B}((\textbf{x},y),1/k) \cap D$. By construction, we must have $y^k = f_2(\textbf{x}^k)$, which is equivalent to $(\textbf{x}^k, y^k) \in B$. This defines a sequence $\left(\left(\textbf{x}^k, y^k\right)\right)_{k \geqslant N}$ in $B$ that converges to $(\textbf{x}, y)$, showing that $(\textbf{x}, y) \in \overline{B}$.
\end{proof}

\begin{proof}[of Theorem \ref{thrm:CFnotWB}]
We consider the CAD $\Cr_3$ of $\R^3$ given by these seven CAD cells : 
 \begin{align*}
    \begin{split}
         C_{111} &= (-\infty, 0)\times \R^2,\\
        C_{211} &= \{0\} \times (-\infty, 0) \times \R, 
    \end{split}
    \begin{split}
        C_{221} &= \{0\}^2\times (-\infty, 0),\\
        C_{222} &= \{0\}^3,\\
        C_{223} &= \{0\}^2\times (0,\infty),
    \end{split}
    \quad
    \begin{split}
        C_{231} &= \{0\} \times (0,\infty) \times \R,\\
        C_{311} &= (0,\infty) \times \R^2.
    \end{split}
    \end{align*}
    To define a CAD in $\R^4$ that is not well-bordered, we slice the cylinder above $C_{311}$ using a single function~$f$. 
This function is chosen so that the three-dimensional section $C_{311} \odot \{f\}$ has a one-dimensional boundary, and so that the closures of the sectors lying below and above it remain sufficiently simple to be described explicitly.

One possible way to proceed is to consider the set $\mathcal{W}$ of pairs $(f,r)$, where $f \in \mathcal{S}^0(C_{311})$ and $r$ is a real algebraic number, such that the two following properties are satisfied:
\begin{enumerate}[(i)]
    \item The boundary $\partial (C_{311} \odot \{f\})$ coincides with the half-line $H_r \subset \mathbb{R}^4$ defined by 
    \[
        x_1 = x_2 = x_3 = 0, \quad x_4 \leqslant r.
    \] \label{item:W1}
    \item If $\left((x_1^k, x_2^k, x_3^k)\right)_{k \in \mathbb{N}^*}$ is a sequence in~$C_{311}$ converging to a point $(x_1, x_2, x_3) \notin C_{311}$ with $x_2 \neq 0$ or $x_3 \neq 0$, then 
    \[
        \lim_{k \to \infty} f(x_1^k, x_2^k, x_3^k) = -\infty.
    \] \label{item:W2}
\end{enumerate}
    Elementary computations show that $\mathcal{W}$ is non-empty and even infinite. 
For instance, if $g$ is the function defined on $C_{311}$ by
$g(x_1, x_2, x_3) = -(x_2^2 + x_3^2)/x_1$ then the pairs $(g + s, s)$ (with $s$ a real-algebraic number) all belong to $\mathcal{W}$. For every $w = (f,r) \in \mathcal{W}$, we construct a CAD $\Cr^w_4$ of $\R^4$ from $\Cr_3$
by slicing the cylinder above $C_{222}$ by $r$ and the cylinder above $C_{311}$ by $f$. When there is no ambiguity, we will omit the reference to $w$ and write $\Cr_4$ in place of $\Cr_4^w$. The decomposition $\Cr_4$ of $\R^4$ contains exactly the following eleven CAD cells: 
    \begin{align*}
    \begin{split}
        C_{1111} & = C_{111} \times \R,\\
        C_{2111} & = C_{211} \times \R,\\
        C_{2211} & = C_{221} \times \R,\\
        C_{2231} & = C_{223} \times \R,\\
        C_{2311} & = C_{231} \times \R,\\
    \end{split}
    \begin{split}
    C_{2221} & = C_{222} \times (-\infty, r),\\
        C_{2222} & = C_{222} \times \{r\},\\
        C_{2223} & = C_{222} \times (r,\infty),
    \end{split}
    \quad \quad
    \begin{split}
        C_{3111} & = C_{311} \odot (-\infty, f),\\
        C_{3112} & = C_{311} \odot \{f\},\\
        C_{3113} & = C_{311} \odot (f,\infty). 
    \end{split}
    \end{align*}
To prove that $\Cr_4$ is closure-finite but not well-bordered, we analyse the closure of each of its eleven cells.

Since the closure of a product is the product of the corresponding closures, and since $\Cr_3$ is clearly closure-finite, it follows directly that the closure of every cell of $\Cr_4$, except for $\overline{C_{3111}}, \overline{C_{3112}}$, and $\overline{C_{3113}}$, is a union of cells of $\Cr_4$. We now show that this also holds for the three remaining cells, thereby proving that $\Cr_4$ is closure-finite.
Since $(f,r) \in \mathcal{W}$ and $H_r = C_{2221} \cup C_{2222}$, we immediately obtain 
\begin{equation}\label{eqn:C3112}
    \overline{C_{3112}} = C_{3112} \cup C_{2221} \cup C_{2222}.
\end{equation}
For the remaining cells, we show that
        \begin{align}
            \overline{C_{3111}} &=C_{3111}\cup  C_{3112} \cup C_{2221} \cup C_{2222}, \label{eqn:3111}\\
            \overline{C_{3113}} &=C_{3113} \cup C_{3112} \cup  C_{2221} \cup C_{2222} \cup C_{2223} \cup  C_{2111} \cup C_{2211} \cup C_{2231} \cup C_{2311}. \label{eqn:3113}
        \end{align}
The right-hand side of Equation \eqref{eqn:3111} is clearly a subset of the left-hand side, since Lemma~\ref{lemma:consecutive} and Equation \eqref{eqn:C3112} imply that $C_{3112} \cup C_{2221} \cup C_{2222} \subseteq \overline{C_{3111}}$.
Suppose, for the sake of contradiction, that there exists 
\[
\mathbf{x} = (x_1, x_2, x_3, x_4) \in \overline{C_{3111}} \setminus \left(C_{3111} \cup C_{3112} \cup C_{2221} \cup C_{2222}\right).
\]
Then there exists a sequence 
$
\bigl((x_1^k, x_2^k, x_3^k, x_4^k)\bigr)_{k \in \mathbb{N}}$ in $C_{3111}$ such that 
$\lim_{k \to \infty} (x_1^k, x_2^k, x_3^k, x_4^k) = \textbf{x}$. For each $k \in \mathbb{N}$, we have $x_1^k > 0$ and $x_4^k < f(x_1^k, x_2^k, x_3^k)$.  
Taking the limit as $k \to \infty$, it follows that $x_1 \geqslant 0$ and 
$x_4 \leqslant \lim_{k \to \infty} f(x_1^k, x_2^k, x_3^k)$.
If $x_1 > 0$, then $x_4 \leqslant f(x_1, x_2, x_3)$ by continuity, which contradicts the assumption that $\mathbf{x} \notin C_{3111} \cup C_{3112}$.  
If $x_1 = 0$, then we must have $x_2 = x_3 = 0$ (otherwise 
$x_4 \leqslant \lim_{k \to \infty} f(x_1^k, x_2^k, x_3^k) = -\infty$ 
by property \eqref{item:W2} of the definition of $\mathcal{W}$, which is absurd).  
Hence, $\mathbf{x} \in C_{2221} \cup C_{2222}$ by property \eqref{item:W1} of the definition of $\mathcal{W}$, which is again a contradiction. This shows that Equation \eqref{eqn:3111} is satisfied.
Finally, for Equation \eqref{eqn:3113}, we first observe that the union of the consecutive sectors and section $C_{3111}, C_{3112}, C_{3113}$ coincide with the half space of equation $x_1 > 0$, which is clearly locally boundary connected. The closure of this union is the union of the closures, but is also of equation $x_1 \geqslant 0$. The latter is precisely the disjoint union (denoted by $\sqcup$) of ten cells, specified in the right-hand side of the equality
\begin{equation}\label{eqn:long}
    \begin{split}
  \overline{C_{3111}} \cup \overline{C_{3112}} \cup \overline{C_{3113}} = &C_{3111} \sqcup C_{3113} \sqcup C_{3112} 
  \sqcup  C_{2221} \sqcup C_{2222} \\
  &\sqcup C_{2223} \sqcup  C_{2111} \sqcup C_{2211} \sqcup C_{2231} \sqcup C_{2311}.
\end{split}
\end{equation}
By Lemma \ref{lemma:consecutive}, we have  $\overline{C_{3111}}\cap\overline{C_{3113}} = \overline{C_{3112}}$, so the left-hand side also reads $\overline{C_{3111}}\cup\overline{C_{3113}}$, or even $(\overline{C_{3111}}\setminus\overline{C_{3112}} )\sqcup\overline{C_{3113}}$. 
Equation \eqref{eqn:3113} then follows directly from Equations \eqref{eqn:C3112} and \eqref{eqn:3111}.

Since the closure of every cell of $\Cr_4$ is a union of cell(s) of $\Cr_4$, the CAD $\Cr_4$ is closure finite. 
Moreover, $\Cr_4$ is not well-bordered, since by Equation \eqref{eqn:C3112}, the boundary of the three dimensional cell $C_{3112}$ coincides with the closure of the one dimensional cell $C_{2221}$. 
\end{proof}

\begin{remark}\label{rem:lazard}
    In connection with earlier work, D. Lazard studied in Example 2.11 of \cite{lazard2010} the roots in $x_4$ of the polynomial
    $x_1^2x_4^2 + (x_1^2+x_2^2)x_4 - (x_1^2+x_3^2)$
    for every $(x_1,x_2,x_3) \in C_{113}$ in order to exhibit a CAD of $\R^4$ that is not well-bordered. The negative root is precisely given by
    \[h(x_1, x_2, x_3) = \frac{-1}{2}\left(\left(1 + \left(\frac{x_2}{x_1}\right)^2\right) + \sqrt{ \left(1 + \left(\frac{x_2}{x_1}\right)^2\right)^2 + 4 \left(1 + \left(\frac{x_3}{x_1}\right)^2\right)}\right) .\]
    It is easily seen that the pair $w = \left(h, -(1+\sqrt{5})/{2}\right)$ belongs to $\mathcal{W}$. Similarly to Lazard’s construction, the resulting CAD $\Cr^w$ obtained above is not well-bordered because it contains $C_{311}\odot \{h\}$ as a section; however, in contrast, the CAD $\Cr^w$ is closure finite.
\end{remark}
\section{Some exotic CAD cells}\label{sec:ExoticCells}

The CAD cells in $\R^4$ we construct in this section will all be either sections or sectors built over the cornet cell (see Definition \ref{def:cornet}), itself a CAD cell in $\R^3$. Proposition \ref{lemma:equi-bases} presented below plays a key role to understand their topological properties at the boundary. In particular, it asserts that the properties under interest can in fact be studied on some corresponding sections and sectors in $\R^3$, which are not CAD cells, and whose bases are slit disks.

\subsection{Equiregularity, and the cornet cell}

As in \cite{DLSregular}, given the inclusions of topological spaces $X \subseteq X'$ and $Y \subseteq Y'$, a homeomorphism (of pairs) $\varphi : (X', X) \to (Y ', Y )$ is a homeomorphism $\varphi : X' \to Y '$ such that $\varphi(X) = Y$. 

\begin{definition}
    We say that two subsets $X \subset \R^n$ and $Y \subset \R^m$ are equiregular if there exists a homeomorphism of pairs $\varphi : (\overline{X}, X) \to (\overline{Y}, Y)$. A subset $S \subseteq \R^n$ is regular if it is a singleton, or if $S$ is equiregular with an open ball in $\R^d$ for some $d \in \N^*$.
\end{definition}

We present an example of two equiregular subsets. One is a CAD cell in $\mathbb{R}^3$, while the other, lying in $\mathbb{R}^2$, is not.

\begin{definition}\label{def:cornet}
    The cornet cell, denoted by $\cornet$, is the CAD cell $(0,2) \odot (g^-, g^+) \odot \{h\}$ where $g^\pm(x_1) = \pm\sqrt{1-(x_1-1)^2}$ and where $h(x_1,x_2) = (x_1^2+x_2^2)/(2x_1)$. We denote by $\mathbb{D}_{\text{s}}$ the slit disk, which is the open unit ball in $\R^2$ centred at the origin minus the radius $(-1,0]\times\{0\}$. 
\end{definition}
\begin{figure}[H]
    \centering
    \vspace{-1cm}
\begin{subfigure}{0.4\textwidth}
\hspace*{-1.3cm}\includegraphics[scale=0.95]{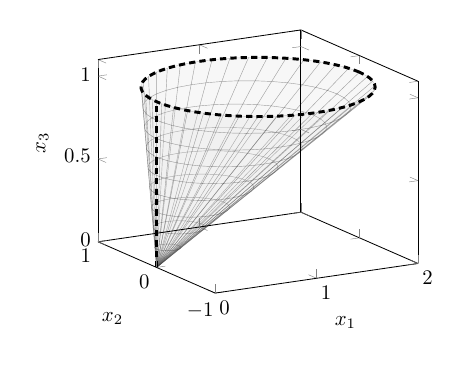}
\end{subfigure}~
\begin{subfigure}{0.4\textwidth}
\includegraphics[scale=0.9]{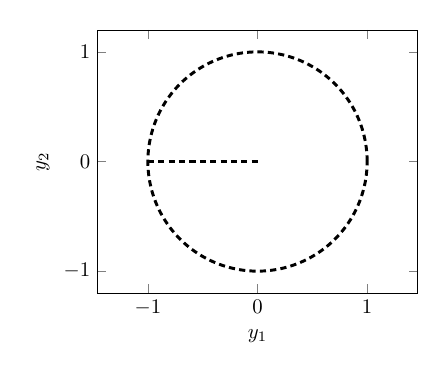}
\end{subfigure}
    \caption{The cornet cell $\cornet$ and the the slit disk $\mathbb{D}_{\text{s}}$. Their boundary are in dashed lines.}
    \label{fig:cornet-slit}
\end{figure}

It is readily observed that both the cornet cell and the slit disk (depicted in Figure \ref{fig:cornet-slit}) fail to be locally boundary connected. 
\begin{proposition}\label{prop:cornet-slitdisk}
    The cornet cell $\cornet$ is equiregular to the slit disk $\mathbb{D}_{\text{s}}$ via the semi-algebraic homeomorphisms of pairs
    \begin{align*}
      \varphi &: (\overline{\cornet}, \cornet) \to (\overline{\mathbb{D}_{\text{s}}},\mathbb{D}_{\text{s}}) : (x_1,x_2,x_3) \mapsto (x_1 - x_3, x_2),\\
      \psi &: (\overline{\mathbb{D}_{\text{s}}},\mathbb{D}_{\text{s}}) \to (\overline{\cornet}, \cornet) : (y_1,y_2) \mapsto \left(y_1 + \sqrt{y_1^2 + y_2^2}, y_2,  \sqrt{y_1^2 + y_2^2}\right),
    \end{align*}
    which are inverse to each other.
\end{proposition}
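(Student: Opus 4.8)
The plan is to treat $\varphi$ and $\psi$ as continuous semialgebraic maps defined on the whole ambient spaces (both are given by genuine continuous formulas, the only non-polynomial ingredient being $\sqrt{y_1^2+y_2^2}$) and to prove that they restrict to mutually inverse homeomorphisms of the two pairs. Concretely, I would establish the four set-equalities $\psi(\mathbb{D}_{\text{s}}) = \cornet$, $\varphi(\cornet) = \mathbb{D}_{\text{s}}$, $\psi(\overline{\mathbb{D}_{\text{s}}}) = \overline{\cornet}$, and $\varphi(\overline{\cornet}) = \overline{\mathbb{D}_{\text{s}}}$, together with $\varphi\circ\psi = \mathrm{id}$ and $\psi\circ\varphi = \mathrm{id}$ on the relevant closures. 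Once these hold, $\varphi$ and $\psi$ are continuous bijections of pairs inverse to one another, hence homeomorphisms of pairs, which is exactly the asserted equiregularity.

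The computational heart is a single algebraic identity. Writing $\rho = \sqrt{y_1^2+y_2^2}$, a direct substitution gives $\varphi(\psi(y_1,y_2)) = (y_1,y_2)$ on all of $\R^2$. In the other direction, $\psi(\varphi(x_1,x_2,x_3)) = (x_1,x_2,x_3)$ holds precisely when $\sqrt{(x_1-x_3)^2+x_2^2} = x_3$, i.e. when $x_3 \geqslant 0$ and $x_1^2 + x_2^2 = 2x_1x_3$. On $\cornet$ this is exactly the defining relation $x_3 = h(x_1,x_2) = (x_1^2+x_2^2)/(2x_1)$ (recall that on the base disk $(x_1-1)^2+x_2^2 < 1$ one has $x_1 > 0$, so $h$ is defined and positive there). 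The crucial point is that $\{\,x_3 \geqslant 0,\ x_1^2+x_2^2 = 2x_1x_3\,\}$ is a closed set containing $\cornet$, hence it contains $\overline{\cornet}$ as well; therefore $\psi\circ\varphi = \mathrm{id}$ propagates to the whole closure $\overline{\cornet}$, without my ever needing an explicit description of $\overline{\cornet}$.

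For the open parts, I would check that $\psi$ sends $\mathbb{D}_{\text{s}}$ into $\cornet$: for $(y_1,y_2)$ in the open unit disk one computes that the image lies on the graph of $h$ over the base disk, the inequality $\rho < 1$ matching $(x_1-1)^2+x_2^2 < 1$, and the deletion of the slit $(-1,0]\times\{0\}$ being exactly what forces $y_1 + \rho > 0$, i.e. $x_1 > 0$. Conversely $\varphi$ sends $\cornet$ into the open disk (since $y_1^2+y_2^2 = x_3^2 = h^2 \in (0,1)$) and avoids the slit (if $y_2 = x_2 = 0$ then $y_1 = x_1/2 > 0$). Combining these inclusions with $\varphi\circ\psi = \mathrm{id}$ and $\psi\circ\varphi = \mathrm{id}$ yields $\psi(\mathbb{D}_{\text{s}}) = \cornet$ and $\varphi(\cornet) = \mathbb{D}_{\text{s}}$. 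The two closure-equalities then come for free: $\overline{\cornet}$ and $\overline{\mathbb{D}_{\text{s}}}$ are compact (both cells are bounded), so $\varphi(\overline{\cornet})$ is closed and contains $\varphi(\cornet) = \mathbb{D}_{\text{s}}$, hence contains $\overline{\mathbb{D}_{\text{s}}}$, while continuity gives $\varphi(\overline{\cornet}) \subseteq \overline{\varphi(\cornet)} = \overline{\mathbb{D}_{\text{s}}}$; the same argument applied to $\psi$ handles the reverse direction.

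The step I expect to demand the most care is the bookkeeping around the slit and the origin: one must verify that $\varphi$ maps $\cornet$ exactly onto $\mathbb{D}_{\text{s}}$, neither hitting nor missing the deleted radius, and that the apparent singularity of $h$ at the origin — where $h$ has no limit, which is precisely what makes both sets fail to be locally boundary connected — causes no trouble. The device that sidesteps a genuinely delicate analysis of $\overline{\cornet}$ near the origin is the observation in the second paragraph that the inverse relation is a \emph{closed} algebraic condition, so propagating $\psi\circ\varphi = \mathrm{id}$ from $\cornet$ to $\overline{\cornet}$ is automatic rather than requiring a direct description of the boundary.
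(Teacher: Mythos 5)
Your proof is correct, but it is organized quite differently from the paper's. The paper proves the proposition by factoring $\psi$ as a composition of two elementary homeomorphisms of pairs: first the embedding $f(y_1,y_2)=\bigl(y_1,y_2,\sqrt{y_1^2+y_2^2}\bigr)$, which carries $(\overline{\mathbb{D}_{\text{s}}},\mathbb{D}_{\text{s}})$ onto the pair $(\overline{C_{\text{s}}},C_{\text{s}})$ associated to the slit cone $C_{\text{s}}=f(\mathbb{D}_{\text{s}})$, and then the ambient affine shear $g(z_1,z_2,z_3)=(z_1+z_3,z_2,z_3)$, which carries $(\overline{C_{\text{s}}},C_{\text{s}})$ onto $(\overline{\cornet},\cornet)$. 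In that factorization the closure statements come for free from structural facts --- the cone is closed in $\R^3$, so $f$ identifies closures, and $g$ is a homeomorphism of all of $\R^3$ --- with no compactness argument and no need to control $\overline{\cornet}$. You instead verify the explicit formulas directly, and you compensate for not describing $\overline{\cornet}$ with two devices of your own: propagating $\psi\circ\varphi=\id$ from $\cornet$ to $\overline{\cornet}$ via the closed algebraic condition $\{x_3\geqslant 0,\ x_1^2+x_2^2=2x_1x_3\}$, and using compactness of $\overline{\cornet}$ and $\overline{\mathbb{D}_{\text{s}}}$ to upgrade $\varphi(\cornet)=\mathbb{D}_{\text{s}}$ and $\psi(\mathbb{D}_{\text{s}})=\cornet$ to the corresponding equalities of closures. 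Both routes rest on the same computation (the identity $h(y_1+\rho,y_2)=\rho$ matching the graph of $h$ with the lifted disk, and the slit deletion corresponding exactly to $x_1>0$); the paper's factorization is shorter, more conceptual, and does not use boundedness of the cells, while your version is self-contained and your closed-condition trick is a clean way to sidestep any analysis of the boundary near the apparent singularity of $h$ at the origin.
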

\begin{proof}
    We obtain the map $\psi$ as the composition of two elementary maps. First, we consider the continuous injection
    $f : \R^2 \to \R^3$ given by $f(y_1,y_2) = \left(y_1,y_2,\sqrt{y_1^2+y_2^2}\right)$ whose inverse is $\pi_2$, and the slit cone $C_{\text{s}} = f(\mathbb{D}_{\text{s}})$. We directly obtain that $f$ induces the homeomorphism of pairs $f|_{\overline{\mathbb{D}_{\text{s}}}} : (\overline{\mathbb{D}_{\text{s}}},\mathbb{D}_{\text{s}}) \to (\overline{C_{\text{s}}}, C_{\text{s}})$. Second, we consider the bijective affine map $g : \R^3 \to \R^3$ given by $g(z_1,z_2,z_3) = (z_1+z_3,z_2,z_3)$. It is straightforward to show that $g$ induces the homeomorphism of pairs $g|_{\overline{C_{\text{s}}}} : (\overline{C_{\text{s}}},C_{\text{s}}) \to (\overline{\cornet}, \cornet)$. 
    The conclusion follows since $\psi = g|_{\overline{C_{\text{s}}}} \circ f|_{\overline{\mathbb{D}_{\text{s}}}}$ and $\varphi = \psi^{-1}$.
\end{proof}

Roughly speaking, the following result asserts that, when the bases $X$ and $Y$ are equiregular, certain corresponding (via pullback) sectors and sections are also equiregular. 
This correspondence enables us to study sections and sectors over $X$ and $Y$ interchangeably. 
Furthermore, an analogous correspondence holds for specific fibres of interest.

\begin{proposition}\label{lemma:equi-bases} \normalfont 
    Let $X \subset \R^n$ and $Y \subset \R^m$ be equiregular sets, with the homeomorphism $\varphi : (\overline{X}, X) \to (\overline{Y}, Y)$.
    Every sector $Y \odot (l,u)$ and every section $Y \odot \{f\}$ are respectively equiregular to the the sector $X \odot (l \circ \varphi, u \circ \varphi)$ and to the section $X \odot \{f \circ \varphi\}$ via adequate restrictions of the homeomorphism $$\varphi \times \id_{\R} : (\overline{X} \times \R, X \times \R) \to (\overline{Y} \times \R, Y \times \R).$$ Furthermore, for every $\textbf{x} \in \overline{X}$, we have
    \begin{align*}
    \left(\varphi \times \id_{\R}\right)\left(\left(\pi_n|_{\overline{X \odot \{f \circ \varphi\}}}\right)^{-1}\left(\{\textbf{x}\}\right)\right) &= \left(\pi_m|_{\overline{Y \odot \{f\} }}\right)^{-1}\left(\{\varphi(\textbf{x})\}\right),\\
    \left(\varphi \times \id_{\R}\right)\left(\left(\pi_n|_{\overline{X \odot (l \circ \varphi, u \circ \varphi)}}\right)^{-1}(\{\textbf{x}\})\right) &= \left(\pi_m|_{\overline{Y \odot (l,u) }}\right)^{-1}\left(\{\varphi(\textbf{x})\}\right).    
    \end{align*}
\end{proposition}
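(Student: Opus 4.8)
The plan is to deduce everything from the single ambient map $\Phi = \varphi \times \id_{\R}$, which the statement already presents as a homeomorphism of pairs $(\overline{X} \times \R, X \times \R) \to (\overline{Y} \times \R, Y \times \R)$ (this is immediate from the corresponding property of $\varphi$). First I would record that the pulled-back bounds are admissible. Since $\varphi$ restricts to a homeomorphism $X \to Y$ (and is semi-algebraic in the situations of interest), the compositions $l \circ \varphi$, $u \circ \varphi$, and $f \circ \varphi$ are continuous, hence lie in $\mathcal{S}^0(X) \cup \{\pm\infty\}$, and the inequality $l \circ \varphi < u \circ \varphi$ holds on $X$ because $\varphi(X) = Y$ and $l < u$ on $Y$; thus $X \odot (l \circ \varphi, u \circ \varphi)$ and $X \odot \{f \circ \varphi\}$ are genuine sectors and sections. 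A direct substitution $\mathbf{y} = \varphi(\mathbf{x})$ then shows that $\Phi$ carries $X \odot \{f \circ \varphi\}$ bijectively onto $Y \odot \{f\}$ and $X \odot (l \circ \varphi, u \circ \varphi)$ bijectively onto $Y \odot (l,u)$.

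Next I would promote this to the closures in order to obtain the equiregularity. The key observation is that $X \odot \{f \circ \varphi\} \subseteq \overline{X} \times \R$ and that $\overline{X} \times \R$ is closed in $\R^{n+1}$; hence the Euclidean closure $\overline{X \odot \{f \circ \varphi\}}$ coincides with the closure computed inside the subspace $\overline{X} \times \R$, and symmetrically on the $Y$-side inside $\overline{Y} \times \R$. Because $\Phi$ is a homeomorphism between these two subspaces, it sends closures to closures, so $\Phi\bigl(\overline{X \odot \{f \circ \varphi\}}\bigr) = \overline{Y \odot \{f\}}$. Combined with the bijection of the open parts from the first step, the restriction $\Phi|_{\overline{X \odot \{f \circ \varphi\}}}$ is precisely a homeomorphism of pairs onto $(\overline{Y \odot \{f\}}, Y \odot \{f\})$, which is the asserted equiregularity; the sector case is identical.

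The fibre identities would then follow formally from the commuting square $\pi_m \circ \Phi = \varphi \circ \pi_n$ on $\overline{X} \times \R$, which holds because $\Phi$ fixes the last coordinate and acts by $\varphi$ on the first $n$. For $\mathbf{x} \in \overline{X}$ and any point $p$ in the closure of the relevant section (or sector) with $\pi_n(p) = \mathbf{x}$, one has $\pi_m(\Phi(p)) = \varphi(\mathbf{x})$, so $\Phi$ maps the $\pi_n$-fibre over $\mathbf{x}$ into the $\pi_m$-fibre over $\varphi(\mathbf{x})$; the reverse inclusion and surjectivity onto that fibre use that $\Phi$ restricts to a bijection of the two closures (established in the previous step) together with the injectivity of $\varphi$.

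The whole argument is essentially bookkeeping once $\Phi$ is in place, and the only genuinely delicate point is the closure step: one must identify the Euclidean closure with the relative closure in $\overline{X} \times \R$, which is exactly what licenses transporting closures through $\Phi$, a map defined only on $\overline{X} \times \R$ rather than on all of $\R^{n+1}$. I would take particular care with the fibres over boundary points $\mathbf{x} \in \overline{X} \setminus X$, since these are precisely the content of the second assertion and are where the naive guess ``fibre $=$ graph value'' can fail.
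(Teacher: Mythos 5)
Your proposal is correct and follows essentially the same route as the paper: both work with the single homeomorphism $\varphi \times \id_{\R}$ between $\overline{X}\times\R$ and $\overline{Y}\times\R$, transport the section/sector and then its closure through this map (using that the closure of the section lies inside $\overline{X}\times\R$), and deduce the fibre identities from the compatibility of $\varphi \times \id_{\R}$ with the projections. Your closure step is, if anything, slightly more explicit than the paper's (identifying the Euclidean closure with the relative closure in the closed subspace $\overline{X}\times\R$), and your commuting-square argument for the fibres is an equivalent rephrasing of the paper's observation that $\left(\pi_k|_A\right)^{-1}(\{\textbf{z}\}) = \left(\{\textbf{z}\} \times \R\right) \cap A$.
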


\begin{proof}
    We give the proof for the sections only since the one for the sectors is analogous. 
    We show that $\varphi \times \id_\R$ induces a homeomorphism of pairs between $(\overline{X \odot \{f \circ \varphi\}}, X \odot \{f \circ \varphi\})$ and $(\overline{Y \odot \{f\}}, Y \odot \{f\})$.
    We first notice that $\varphi\times \id_\R$ is a continuous map from $\overline{X}\times \R$ to $\overline{Y}\times \R$, being the product of
    two continuous maps. It is a homeomorphism since its inverse $\varphi^{-1}\times  \id_\R$ is also continuous. It follows directly from the definitions that we have 
    $ (\varphi \times \id_\R) (X\odot \{f\circ \varphi\})\subseteq Y\odot \{f\}$. This inclusion is actually an equality since the same argument holds for  $\varphi^{-1}\times \id_\R$ and $Y\odot \{f\circ\varphi\circ \varphi^{-1}\}$. Now, $\overline{X\odot \{f\circ \varphi\}}$  is a subset of $\overline{X}\times \R$ since $\pi_n$ is continuous. Since $\varphi\times\id_\R$ is a homeomorphism, we thus have 
    $(\varphi \times \id_\R)(\overline{X\odot \{f\circ \varphi\}}) =\overline{Y\odot \{f\}}.$ 
    
    The remaining equalities follow immediately from the first part of the proof, and the fact that $\left(\pi_k|_A\right)^{-1}(\{\textbf{z}\}) = \left(\{\textbf{z}\} \times \R\right) \cap A$ for every $k \in \N^*, \textbf{z} \in \R^k, A \subseteq \R^{k+1}$. 
\end{proof}

\subsection{Applications}

We apply the results developed in the previous subsection to construct explicit CAD cells in $\R^4$ with some topological properties of interest. The proofs can readily be adapted to produce infinitely many distinct counterexamples. For the sake of presentation, we focus on some simple representative for each property.

In this subsection, we consider a semi-algebraic homeomorphism $\varphi$ between the pairs $(\overline{\cornet}, \cornet)$ and $(\overline{\mathbb{D}_{\text{s}}}, \mathbb{D}_{\text{s}})$ (see Proposition \ref{prop:cornet-slitdisk}).


\subsubsection{Closure of CAD cells and contractibility.}

In Theorem 3.18 of \cite{DLSregular}, it is shown that if $\Cr_n$ is a CAD of $\R^n$ whose projection $\Cr_{n-1}$ is a strong CAD of $\R^{n-1}$ (that is, a well-bordered CAD in which every cell is locally boundary connected), then the closure of any bounded cell $C$ of $\Cr_n$ is contractible.
The proof relies on Proposition 5.2 of \cite{lazard2010}, which requires the same assumption on $\Cr_{n-1}$ and states that for such a cell $C$, and for every $\textbf{x} \in \overline{\pi_{n-1}(C)}$, the fibre $(\pi_{n-1}|_{\overline{C}})^{-1}(\{\mathbf{x}\})$ is a closed segment.
It has been conjectured (see Conjectures 7.2.3 and 7.2.4 of \cite{locatelli}, as well as Question 3.19 of \cite{DLSregular}) that these results hold without the strongness assumption.
We disprove these conjectures by constructing explicit counterexamples in Proposition \ref{prop:fibre} and Proposition \ref{prop:closureContractible}. 

\begin{proposition}\label{prop:fibre}\normalfont
    There exist a CAD cell $C$ in $\R^4$ and a point $\textbf{x} \in \overline{\pi_3(C)}$ such that the fibre $(\pi_{3}|_{\overline{C}})^{-1}(\{\textbf{x}\})$ is a doubleton. In particular, this fibre is not a closed segment.
\end{proposition}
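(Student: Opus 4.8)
The plan is to transport the problem from the cornet cell $\cornet$ to its equiregular model, the slit disk $\mathbb{D}_{\text{s}}$, where a two-valued fibre is easy to produce by exploiting the failure of local boundary connectedness along the slit. First I would build a bounded continuous semi-algebraic function $f$ on $\mathbb{D}_{\text{s}}$ whose limit along the slit differs according to whether one approaches from the upper or the lower half-plane. Writing points of $\mathbb{D}_{\text{s}}$ as $(y_1,y_2)$ with $r=\sqrt{y_1^2+y_2^2}$, a convenient choice is the half-angle sine
$$f(y_1,y_2) = \frac{t}{\sqrt{1+t^2}}, \qquad t = \frac{y_2}{r+y_1},$$
which is well defined and continuous on $\mathbb{D}_{\text{s}}$ precisely because $r+y_1>0$ off the slit (the slit is exactly $\{y_2=0,\ y_1\leqslant 0\}$). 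In polar coordinates this is $f=\sin(\theta/2)$ with $\theta\in(-\pi,\pi)$, so $|f|\leqslant 1$, and $f\to +1$ (resp. $f\to -1$) as $\theta\to\pi^-$ (resp. $\theta\to-\pi^+$).

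Next I would analyse the closure of the section $\mathbb{D}_{\text{s}}\odot\{f\}\subset\R^3$ over a fixed slit point $p=(-a,0)$ with $0<a<1$. Since $p\in\overline{\mathbb{D}_{\text{s}}}\setminus\mathbb{D}_{\text{s}}$ is approached within $\mathbb{D}_{\text{s}}$ both from above (along $(-a,\varepsilon)$, $\varepsilon\to 0^+$) and from below (along $(-a,-\varepsilon)$), the points $(p,1)$ and $(p,-1)$ both lie in $\overline{\mathbb{D}_{\text{s}}\odot\{f\}}$. The key step is to show that these are the only two points of the fibre: as $(y_1,y_2)\to p$ inside $\mathbb{D}_{\text{s}}$ one has $\cos\theta=y_1/r\to-1$, which forces $\theta\to\pm\pi$ and hence $f=\sin(\theta/2)\to\pm1$, so no value strictly between $-1$ and $1$ can arise as a subsequential limit. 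This yields
$$\bigl(\pi_2|_{\overline{\mathbb{D}_{\text{s}}\odot\{f\}}}\bigr)^{-1}(\{p\}) = \{(p,1),(p,-1)\},$$
a doubleton.

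Finally I would pull this back to the cornet. With $\varphi:(\overline{\cornet},\cornet)\to(\overline{\mathbb{D}_{\text{s}}},\mathbb{D}_{\text{s}})$ the semi-algebraic homeomorphism of Proposition~\ref{prop:cornet-slitdisk}, the composite $f\circ\varphi$ lies in $\mathcal{S}^0(\cornet)$, so $C=\cornet\odot\{f\circ\varphi\}$ is a section over a CAD cell of $\R^3$ and hence a CAD cell of $\R^4$. Taking $\textbf{x}=\varphi^{-1}(p)\in\overline{\cornet}=\overline{\pi_3(C)}$ and applying the fibre identity of Proposition~\ref{lemma:equi-bases}, the homeomorphism $\varphi\times\id_{\R}$ carries $(\pi_3|_{\overline{C}})^{-1}(\{\textbf{x}\})$ bijectively onto $(\pi_2|_{\overline{\mathbb{D}_{\text{s}}\odot\{f\}}})^{-1}(\{p\})$. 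Since the latter is a doubleton, so is the former, and in particular it is not a closed segment, which is exactly the claim.

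The main obstacle is the second step: verifying that the fibre over the slit point is exactly a doubleton rather than the full segment $\{p\}\times[-1,1]$. This is precisely where the choice of $f$ matters — one must ensure that $f$ is bounded (so the fibre consists of finite heights) and that, near $p$, $f$ avoids every value strictly between its two one-sided limits (so no intermediate height is attained in the closure). The identity $f=\sin(\theta/2)$ together with $\cos\theta\to-1$ makes both points transparent, but it is the only place where genuine care is required; the transfer to a genuine CAD cell in $\R^4$ is then a formal consequence of Propositions~\ref{prop:cornet-slitdisk} and~\ref{lemma:equi-bases}.
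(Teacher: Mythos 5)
Your proposal is correct and takes essentially the same route as the paper: construct a bounded continuous semi-algebraic function on the slit disk whose one-sided limits along the slit differ, show the fibre of the closure of its graph over a slit point is exactly a doubleton, and transfer the result to a CAD cell over the cornet via the homeomorphism of Proposition \ref{prop:cornet-slitdisk} and the fibre identity of Proposition \ref{lemma:equi-bases}. The only difference is the choice of witness function --- the paper uses a piecewise ``trousers'' function whose graph lies in the algebraic set $y_3(y_3-y_1)=0$, making the two-point verification immediate, whereas you use the half-angle function $\sin(\theta/2)$ and a limit argument via $\sin^2(\theta/2)=(1-\cos\theta)/2$; both verifications are valid.
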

Note that we work in $\mathbb{R}^4$, rather than  a lower-dimensional Euclidean spaces, due to Corollary 3.3.34 of \cite{arnon-thesis}, which asserts that, in those settings, the fibre of interest is always a closed segment.

\begin{proof}
    We consider the continuous semi-algebraic function
    \begin{equation}\label{eqn:f-trousers}
        f : \mathbb{D}_{\text{s}} \to \R : (y_1,y_2) \mapsto \begin{cases}
        y_1 &\text{ if } y_1 < 0 \text{ and } y_2 <0,\\
        0 &\text{ otherwise}.
    \end{cases}
    \end{equation}
    We consider the CAD cell $C = \cornet \odot (f \circ \varphi)$ in $\R^4$ and the point $\textbf{x} = (0,0,1/2) \in \overline{\cornet}$. Proposition \ref{lemma:equi-bases} asserts that $(\pi|_{\overline{C}})^{-1}(\{\textbf{x}\})$ is homeomorphic to $(\pi|_{\overline{\mathbb{D}_{\text{s}} \odot \{f\}}})^{-1}(\{\varphi(\textbf{x}\}))$. We show that the latter contains exactly two distinct points, as depicted in Figure \ref{fig:trousers-fibre}. First, it is straightforward that we have $\varphi(\textbf{x}) = (-1/2,0)$ and
        \begin{align*}
            \{(-1/2,0,-1/2), (-1/2,0,0)\} \subseteq (\pi|_{\overline{\mathbb{D}_{\text{s}} \odot \{f\}}})^{-1}(\{\varphi(\textbf{x}\})).
        \end{align*}
    This inclusion in an equality since $\mathbb{D}_{\text{s}} \odot \{f\}$ is contained in the closed set of equation $y_3(y_3-y_1) = 0$, and so its closure is as well.
\end{proof}

\begin{figure}
    \centering
   \includegraphics[scale=1]{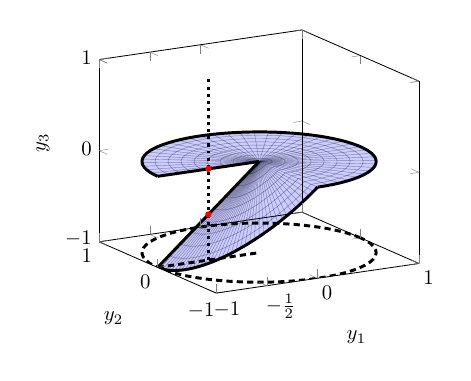}
    \caption{The the section $\mathbb{D}_{\text{s}} \odot \{f\}$ (blue), its boundary (thick black) and of the fibre $(\pi|_{\overline{\mathbb{D}_{\text{s}} \odot \{f\}}})^{-1}(\{\varphi(\textbf{x})\})$ (red dots) of Proposition \ref{prop:fibre}.}
    \label{fig:trousers-fibre}
\end{figure}

In the previous proof, the graph of $f$, which is the section $\mathbb{D}_{\text{s}} \odot \{f\}$, is reminiscent to the Trousers of \cite{miniCAD}, although it differs slightly. In particular, $\mathbb{D}_{\text{s}} \odot \{f\}$ is bounded, and no point of this section projects to the set of equation $(y_1<0) \land  (y_2 = 0)$ (the slot of the slit disk~$\mathbb{D}_{\text{s}}$).

\begin{remark}\label{rem:weirdAssumption}
    We provide a counterexample to the second part of Lemma \ref{lemma:consecutive} (in the case where the sector $A \cup B \cup C$ is not locally boundary connected) based on the map $f$ defined in Equation \eqref{eqn:f-trousers}. Consider $S = \mathbb{D}_{\text{s}}$, together with $f_1 = -\infty, f_2 = f$, and $f_3 = +\infty$. The point $\textbf{x} = (-1/2,0,-1/4)$ belongs to the intersection $\overline{A} \cap \overline{C}$, but not to $\overline{B}$.
    By equiregularity, we readily obtain a counter-example where $A,B,C$ and their union are in addition all CAD cells. For instance, take $S = \cornet$, $f_1 = -\infty$, $f_2 = f \circ \varphi$, $f_3 = + \infty$ and $\textbf{x} = \varphi^{-1}\times \id_{\R}(-1/2,0,-1/4)$.
\end{remark}

\begin{proposition}\label{prop:closureContractible}\normalfont
    There exists a bounded CAD cell in $\R^4$ whose closure is not simply connected (and hence not contractible).
\end{proposition}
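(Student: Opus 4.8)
The plan is to reduce the problem, via the equiregularity machinery of Proposition \ref{lemma:equi-bases}, to the construction of a single bounded continuous semi-algebraic function on the slit disk whose graph has a non-simply-connected closure, and then to transport this example to a section over the cornet cell. Concretely, I would look for a bounded $f \in \mathcal{S}^0(\mathbb{D}_{\text{s}})$ and set $C = \cornet \odot \{f \circ \varphi\}$, where $\varphi$ is the homeomorphism of pairs of Proposition \ref{prop:cornet-slitdisk}. Since $\cornet$ is bounded (one checks that $h < 1$ on its base, so $\cornet$ sits in a compact box) and $f$ is bounded, $C$ is a bounded CAD cell of $\R^4$; and by Proposition \ref{lemma:equi-bases} the homeomorphism $\varphi \times \id_\R$ restricts to a homeomorphism $\overline{C} \to \overline{\mathbb{D}_{\text{s}} \odot \{f\}}$. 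It therefore suffices to arrange that $\overline{\mathbb{D}_{\text{s}} \odot \{f\}}$ is not simply connected.

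The topological idea is that the slit forces the closure of the graph to identify points lying on its two lips precisely when the corresponding one-sided limits of $f$ coincide. I would describe the closed slit disk in ``polar'' parameters $(\theta,\rho) \in [-\pi,\pi]\times[0,1]$, the values $\theta = \pm\pi$ corresponding to the upper and lower sides of the slit at radius $\rho$, and choose $f$ so that it extends continuously to these parameters. Writing $a(\rho)$ and $b(\rho)$ for the limits of $f$ along the upper and lower lips, the plan is to pick $f$ so that: (i) $f$ tends to a constant at the origin, so that the edge $\rho = 0$ collapses to a single point; and (ii) $a(\rho) = b(\rho)$ for exactly one radius $\rho^\ast \in (0,1)$ (apart from the forced coincidence at $\rho = 0$), with $a \neq b$ elsewhere. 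A convenient way to realise (i)--(ii) semi-algebraically is to build $f$ by a sign-of-$y_2$ construction in the spirit of Equation \eqref{eqn:f-trousers}, arranged so that the jump $a-b$ across the slit changes sign exactly once (for instance with the lip limits crossing transversally at $\rho^\ast = 1/2$), while keeping $f$ continuous along the part $\{y_2 = 0,\ y_1 > 0\}$ of the axis that actually lies in $\mathbb{D}_{\text{s}}$.

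I would then identify the closure explicitly. Since $f$ extends continuously to the compact parameter rectangle, $\overline{\mathbb{D}_{\text{s}} \odot \{f\}}$ is the image of that rectangle under the (continuous) extended graph map, hence a quotient of it. Because the graph of a function is embedded over its domain, the only possible identifications occur over the slit and over the origin; the choices (i)--(ii) guarantee that these amount to exactly collapsing the edge $\rho = 0$ to a point (yielding a disk) and then identifying the single pair of lip points at $\rho^\ast$. Consequently $\overline{\mathbb{D}_{\text{s}} \odot \{f\}}$ is homeomorphic to a closed disk with two distinct boundary points identified, a space homotopy equivalent to $S^1$; in particular $\pi_1 \cong \Z$, so it is neither simply connected nor contractible, and the same holds for $\overline{C}$ by the homeomorphism above.

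The main obstacle I anticipate is step (ii) together with the verification that no spurious identifications occur: producing an honestly semi-algebraic, continuous, bounded $f$ whose two one-sided limits along the slit differ everywhere except at a single interior radius is delicate, and one must check carefully that the extended graph map is injective away from the intended origin-collapse and single lip-crossing (so that the closure is exactly the disk with two boundary points glued, rather than some further quotient whose fundamental group might collapse to the trivial group). Controlling the behaviour near the origin, where the slit disk is least well behaved, is the most error-prone part of the argument.
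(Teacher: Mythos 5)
Your overall reduction coincides with the paper's: both proofs take a bounded section $\cornet \odot \{f\circ\varphi\}$ over the cornet cell and invoke Proposition \ref{lemma:equi-bases} to replace its closure by $\overline{\mathbb{D}_{\text{s}} \odot \{f\}}$, so everything hinges on the choice of $f$ over the slit disk. Where you genuinely diverge is in the second half. The paper simply exhibits one explicit function, namely $f(y_1,y_2)=\sign(y_2)\sqrt{\tfrac14-(y_1+\tfrac12)^2}$ for $-1<y_1<0$ and $f=0$ otherwise, whose two lip limits along the slit agree at \emph{both ends} of the slit rather than at a single interior radius; consequently the closure of its graph contains the embedded circle $(y_1+\tfrac12)^2+y_3^2=\tfrac14$, $y_2=0$, and the paper writes down an explicit deformation retraction of $\overline{\mathbb{D}_{\text{s}}\odot\{f\}}$ onto that circle (push the $y_1>0$ part linearly to $y_1=0$, scale $y_2$ to $0$, keep $y_3$; this stays inside the closure precisely because this $f$ depends only on $y_1$ and the sign of $y_2$). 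That argument needs no polar parametrization, no uniqueness or transversality of the lip crossing, and no injectivity bookkeeping: identifications in the closure are irrelevant, since one only needs a homotopy equivalence onto an embedded circle, not the homeomorphism type of the closure. Your route, realizing the closure as a quotient of the compact parameter rectangle and recognizing it as a disk with two boundary points identified, is also valid (a continuous surjection from a compact space onto a Hausdorff space is closed, hence a quotient map, and $D^2$ with two boundary points glued is homotopy equivalent to $S^1$), and it even buys a sharper conclusion, namely the exact topology of the closure; the price is exactly the set of delicate verifications you flag at the end.

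The one real gap is that you never exhibit $f$: your conditions (i)--(ii) are a specification, not a construction, and the proof is incomplete until a bounded continuous semi-algebraic $f$ meeting them is produced. The gap is fillable within your own scheme, for instance by
\begin{equation*}
    f(y_1,y_2)=\sign(y_2)\,\Bigl(y_1+\tfrac12\Bigr)\Bigl(\sqrt{y_1^2+y_2^2}-y_1\Bigr).
\end{equation*}
This function is semi-algebraic and bounded on $\mathbb{D}_{\text{s}}$; it is continuous there because the factor $\sqrt{y_1^2+y_2^2}-y_1$ vanishes on the half-axis $y_1\geq 0$, $y_2=0$, which also forces $f\to 0$ at the origin (your condition (i)); and its lip limits along the slit are $\mp 2y_1\bigl(y_1+\tfrac12\bigr)$, which cross transversally at the single interior radius $y_1=-\tfrac12$ and differ everywhere else on $-1\leq y_1<0$, in particular at the outer endpoint $y_1=-1$ (your condition (ii)). With such an $f$ in hand, your quotient analysis goes through and yields the same conclusion as the paper.
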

\begin{proof}
We consider the semi-algebraic continuous map 
\begin{align*}
            f : \mathbb{D}_{\text{s}} \to \R : (y_1,y_2) \mapsto \begin{cases}
                \sign(y_2)\sqrt{\frac{1}{4}-(y_1+\frac{1}{2})^2} &\text{ if } -1 < y_1 < 0,\\
                0 &\text{ otherwise.}
            \end{cases}
        \end{align*}
whose graph $\mathbb{D}_{\text{s}} \odot \{f\}$ is depicted in Figure \ref{fig:conj724}. We consider the CAD cell $\cornet \odot (f \circ \varphi)$ and show that its closure is not simply connected. By Proposition \ref{lemma:equi-bases}, the set $\cornet \odot (f \circ \varphi)$ is equiregular to $\mathbb{D}_{\text{s}} \odot \{f\}$, so that $\overline{\cornet \odot (f \circ \varphi)}$ is homeomorphic to $\overline{\mathbb{D}_{\text{s}} \odot \{f\}}$. It is therefore sufficient to show that the latter is not simply connected in order to conclude that the former is not. To this end, we observe that the circle $A$ that lies in the plane $y_2 = 0$, centred at $(-1/2, 0,0)$ and of radius $1/2$ is a deformation retract of $\overline{\mathbb{D}_{\text{s}} \odot \{f\}}$, via the deformation retraction $$F : \overline{\mathbb{D}_{\text{s}} \odot \{f\}} \times [0,1] \to \overline{\mathbb{D}_{\text{s}} \odot \{f\}}$$ defined by 
\begin{align*}
    F\left((y_1,y_2,y_3),t\right)=\left(\frac{(1-t)(y_1+|y_1|)+(y_1-|y_1|)}{2}, (1-t)y_2, y_3\right).
\end{align*}
Note that the first component of $F$ is alternatively given by $(1-t)y_1$ if $y_1 > 0$ and $y_1$ otherwise. 
The conclusion follows from the fact that a deformation retraction is a homotopy equivalence, and that the circle $A$ is not simply connected.
\end{proof}

\begin{figure}
\begin{subfigure}{0.4\textwidth}
    \hspace*{-2.3cm}\includegraphics[scale = 1]{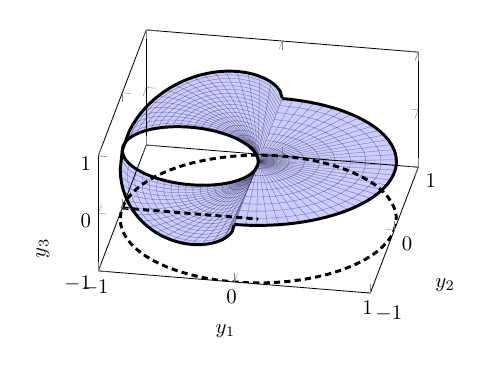}
    \end{subfigure}~\hspace{-0.9cm}~
    \begin{subfigure}{0.4\textwidth}
        \includegraphics[scale = 1]{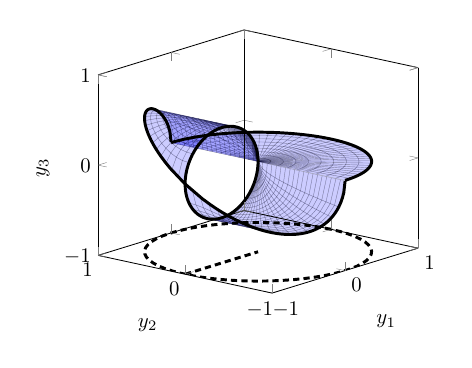}
    \end{subfigure}
        \caption{Two different views of the section $\mathbb{D}_{\text{s}} \odot \{f\}$ (blue) and its boundary (thick black) of Proposition \ref{prop:closureContractible}.}
        \label{fig:conj724}
\end{figure}

\subsubsection{Non-regular sectors with regular bounds.}
We provide a counterexample to Conjecture 7.2.2 of \cite{locatelli}, which asserts that, in general, a sector bounded above and below by regular sections is itself regular.
The motivation behind this conjecture lies in its potential to generalize Lemma 5 of \cite{piano} (see also Lemma 4.1.1 of \cite{locatelli}), and thus to extend the main theorem of \cite{DLSregular} (Theorem 3.11), which relies on that lemma.

\begin{proposition}\label{prop:conj722}
    There exists a CAD cell $\cornet \odot (l,u)$ of $\R^4$, which is a non-regular sector and such that the sections $\cornet \odot \{l\}$ and $\cornet \odot \{u\}$ are both regular.
\end{proposition}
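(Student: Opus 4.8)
The plan is to transport the entire problem to the slit disk via Proposition \ref{lemma:equi-bases} and to build there two regular sections whose enclosed sector fails to be locally boundary connected. Concretely, I would seek continuous semi-algebraic functions $L<U$ on $\mathbb{D}_{\text{s}}$ such that each section $\mathbb{D}_{\text{s}}\odot\{L\}$ and $\mathbb{D}_{\text{s}}\odot\{U\}$ is regular while the sector $\mathbb{D}_{\text{s}}\odot(L,U)$ is not, and then set $l=L\circ\varphi$ and $u=U\circ\varphi$. Proposition \ref{lemma:equi-bases} makes $\cornet\odot(l,u)$ equiregular to $\mathbb{D}_{\text{s}}\odot(L,U)$ and its two bounding sections equiregular to $\mathbb{D}_{\text{s}}\odot\{L\}$ and $\mathbb{D}_{\text{s}}\odot\{U\}$, so all four properties transfer. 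The explicit choice I would make is $s(y_1,y_2)=\sign(y_2)\,(-y_1)$ on the region $y_1<0$ and $s\equiv 0$ on $y_1\geqslant 0$, which is continuous and semi-algebraic on $\mathbb{D}_{\text{s}}$, together with $L=s$ and $U=s+1$. The decisive feature of $s$ is that along the open slit its two one-sided limits (the ``lips'') are $+|y_1|$ from above and $-|y_1|$ from below: distinct for $-1<y_1<0$, but coalescing at the tip of the slit as $y_1\to 0^-$.

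For the sections, the plan is to show that $\overline{\mathbb{D}_{\text{s}}\odot\{s\}}$ is a closed topological disk (the section of $U=s+1$ is then a vertical translate, hence homeomorphic as a pair). Projection to the base identifies the graph itself with $\mathbb{D}_{\text{s}}$, an open disk, so the work lies in tracking the frontier of the closure. Tracing it — up the upper lip from the tip to $(-1,0,1)$, across the outer-circle arc to $(-1,0,-1)$, and back down the lower lip to the tip — should exhibit it as a single simple closed curve bounding the graph, whence the closure is a closed disk and the section is regular. The one spot that needs care is the slit tip $(0,0,0)$, where the lips meet and the radial edge of the base collapses to a point; there I would verify that this collapse is the standard polar-coordinate quotient of a rectangle, so that the closure remains a manifold-with-boundary disk with a corner rather than a pinch.

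For the sector, the plan is to detect non-regularity through failure of local boundary connectedness, using that this property is an invariant of the pair $(\overline{\,\cdot\,},\cdot)$ and holds for every open ball (as is implicit in the observation that the equiregular sets $\cornet$ and $\mathbb{D}_{\text{s}}$ both fail it). Over a slit point $(y_1^0,0)$, the fibre of the sector in the closure is the union of the ``above'' interval $(|y_1^0|,|y_1^0|+1)$ and the ``below'' interval $(-|y_1^0|,-|y_1^0|+1)$, and these overlap precisely when $|y_1^0|<1/2$. Taking $y_1^0=-1/4$ and $z_0=1/2$, the point $p=(-1/4,0,1/2)$ lies in $\partial(\mathbb{D}_{\text{s}}\odot(L,U))$, and for every small $\varepsilon$ the set $\mathbb{B}(p,\varepsilon)\cap(\mathbb{D}_{\text{s}}\odot(L,U))$ splits into an upper slab ($y_2>0$) and a lower slab ($y_2<0$) separated by the removed slit, hence is disconnected. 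Thus the sector is not locally boundary connected, and therefore not regular.

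The main obstacle I anticipate is the section-regularity step: unlike the sector, whose defect is read off from a single bad fibre, proving that $\overline{\mathbb{D}_{\text{s}}\odot\{s\}}$ is genuinely a disk requires controlling the global topology of the closure, and in particular the behaviour at the slit tip where three pieces (the two lips and the flat part over $y_1\geqslant 0$) come together. Everything else — continuity and semi-algebraicity of $l$ and $u$, the inequality $l<u$ (here simply $u-l=1$), and the transfer back to $\cornet$ — is routine given Proposition \ref{lemma:equi-bases}.
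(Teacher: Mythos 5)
Your proposal follows the same skeleton as the paper's proof: both transport the whole problem to the slit disk via Proposition \ref{lemma:equi-bases} and build the counterexample there. But the two substantive steps are done genuinely differently. For section regularity, the paper never has to analyse the closure of an explicitly given graph: it \emph{manufactures} the upper section as the image $P = m(H)$ of a closed half-disk under the injective continuous map $m(z_1,z_2) = (z_1^2 - z_2^2,\, 2z_1z_2,\, z_2 + 2\sqrt{z_1^2+z_2^2})$, so that $(\overline{P},P) \cong (\overline{H},H)$ comes for free from compactness, and it takes $l = -u$; regularity of both sections is then automatic. Your tent function $s$ instead forces you to prove by hand that $\overline{\mathbb{D}_{\text{s}} \odot \{s\}}$ is a closed disk. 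For non-regularity, the paper punctures the closure at the origin and shows the punctured set is not simply connected (it deformation retracts onto a punctured disk), whereas a once-punctured closed $3$-ball always is; you instead exhibit a boundary point where local boundary connectedness fails. Your certificate is more elementary (one disconnected neighbourhood), the paper's is more robust, because simple connectedness of the punctured closure is manifestly preserved by any homeomorphism of pairs.

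That last point is where your write-up has a genuine gap. Local boundary connectedness, as defined in this paper, quantifies over \emph{metric balls} $\mathbb{B}(p,\varepsilon)$ of the ambient space, and a homeomorphism of pairs $(\overline{X},X) \to (\overline{Y},Y)$ does not carry metric balls to metric balls; so the claim that this property ``is an invariant of the pair'' cannot simply be quoted, and your parenthetical justification (that $\cornet$ and $\mathbb{D}_{\text{s}}$ both fail it) establishes nothing of the sort. The repair is easy but must be made explicit: replace the metric property by its topological surrogate. At $p = (-1/4,0,1/2)$, \emph{every} sufficiently small neighbourhood $V$ of $p$ in $\overline{\mathbb{D}_{\text{s}} \odot (L,U)}$ meets the sector in a disconnected set, since $V$ contains sector points with $y_2 > 0$ and with $y_2 < 0$ (both accumulate at $p$) but, once $V \subseteq \mathbb{B}(p,1/4)$, no sector points with $y_2 = 0$. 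By contrast, every point of a closed ball $\overline{B}$ admits a neighbourhood basis in $\overline{B}$ (restrictions of metric balls) whose members meet $B$ in convex, hence connected, sets. These two statements concern arbitrary neighbourhoods inside the closure, so they are exchanged by any homeomorphism of pairs, and they are incompatible. Phrased this way, your disconnection computation does yield non-regularity, with no appeal to invariance of the metric definition.

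On the section step, which you rightly flag as the main obstacle: tracing the frontier as a Jordan curve is not by itself a proof, since knowing that the interior is an open disk and the frontier is a circle does not on its own identify the homeomorphism type of the pair; some tameness or explicit-parametrization argument is required. The rectangle-quotient idea you mention in passing is the clean way to finish, and it does work. In polar coordinates, the map $\Phi(r,\theta) = \left(r\cos\theta,\, r\sin\theta,\, \sign(\theta)\, r \max(-\cos\theta, 0)\right)$ sends the open rectangle $(0,1)\times(-\pi,\pi)$ homeomorphically onto $\mathbb{D}_{\text{s}} \odot \{s\}$, extends continuously to $[0,1]\times[-\pi,\pi]$ with image $\overline{\mathbb{D}_{\text{s}} \odot \{s\}}$, and is injective except that it collapses the edge $r = 0$ to the tip $(0,0,0)$ (the edges $\theta = \pm\pi$ go to the two disjoint lips). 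A closed rectangle with one edge collapsed is a closed triangle, with the open rectangle going onto its interior, so $\left(\overline{\mathbb{D}_{\text{s}} \odot \{s\}},\, \mathbb{D}_{\text{s}} \odot \{s\}\right)$ is homeomorphic to a closed disk with its interior; regularity of $\mathbb{D}_{\text{s}} \odot \{s+1\}$ follows by vertical translation. With these two repairs, your route is correct and complete, though it trades the paper's built-in regularity and clean homotopy invariant for more hands-on verifications.
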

\begin{proof}
    We first define the semi-algebraic continuous functions $l$ and $u$ on the cornet cell. We denote by $H \subset \R^2$ the open half-disk of equation $(z_1^2+z_2^2 < 1) \land (z_1 > 0)$ and consider the map 
    $$m : \overline{H} \to \R^3 :  (z_1,z_2) \mapsto \left(z_1^2-z_2^2,2z_1z_2,z_2+2\sqrt{z_1^2+z_2^2}\right).$$
    Note that the two first components are the real and imaginary part of the square of the complex number $z_1 + z_2 i$. This map $m$ is a continuous injection from a compact to a Hausdorff space, and hence a homeomorphism between $\overline{H}$ and its image. Denoting by $P = m(H)$, this map clearly induces a homeomorphism of pairs $(\overline{H},H) \to (\overline{P},P)$, i.e. $P$ and $H$ are equiregular.  Furthermore, the set $P$ is a section above $\mathbb{D}_{\text{s}}$ since the two first components of $m$ define a homeomorphism between $H$ and $\mathbb{D}_{\text{s}}$. More precisely, the algebraic formula to compute the principal square root of a complex number gives that $P = \mathbb{D}_{\text{s}} \odot \{\widetilde{u}\}$, where 
    $$\widetilde{u}(y_1,y_2) = \sign(y_2)\sqrt{(\sqrt{y_1^2 + y_2^2} - y_1)/2} + 2\sqrt[4]{y_1^2 + y_2^2}.$$
    We consider the strictly positive continuous maps $u = \widetilde{u} \circ \varphi$ defined on $\cornet$ and write $l = -u$.
     
     We show that the sections $\cornet \odot \{l\}, \cornet \odot \{u\}$ are regular, but the sector $\cornet \odot (l,u)$ is not. By Proposition \ref{lemma:equi-bases}, we obtain that these three CAD cells are respectively equiregular to the corresponding section $\mathbb{D}_{\text{s}} \odot \{-\widetilde{u}\},\mathbb{D}_{\text{s}} \odot \{\widetilde{u}\}$ or the corresponding sector $\mathbb{D}_{\text{s}} \odot (-\widetilde{u}, \widetilde{u})$ (see Figure \ref{fig:conj722}). To show that the former is regular or not, it is equivalent to show that the latter is.
    First, we already showed that the section $P = \mathbb{D}_{\text{s}} \odot \{\widetilde{u}\}$ is equiregular to the set $H$, which is obviously regular. 
    Second, for the sake of a contradiction, we assume that $\mathbb{D}_{\text{s}} \odot (-\widetilde{u}, \widetilde{u})$ is regular. In this case, the closure of this sector would homeomorphic to a closed unit ball in $\R^3$. However, the once-punctured set $S = \overline{\mathbb{D}_{\text{s}} \odot (-\widetilde{u}, \widetilde{u})} \setminus \{(0,0,0)\}$ is not simply connected (since the punctured disk of equation $(0<y_1^2+y_2^2<1) \land (y_3 = 0)$  is a deformation retract of $S$), whereas any once-punctured unit closed ball in $\R^3$ is. 
\end{proof}

\begin{figure}[H]
    \centering
    \includegraphics[scale=1.2]{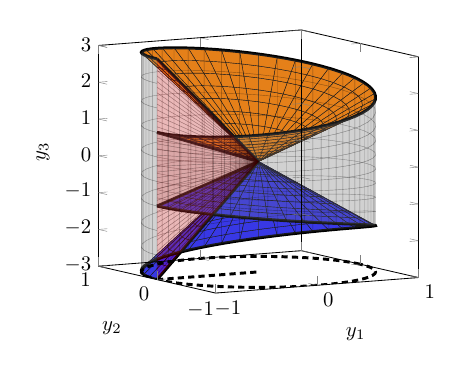}
    \caption{The sections $\mathbb{D}_{\text{s}} \odot \{-\widetilde{u}\}$ (blue) and $\mathbb{D}_{\text{s}} \odot \{\widetilde{u}\}$ (orange), their boundaries (thick black), and the the sector $\mathbb{D}_{\text{s}} \odot (-\widetilde{u},\widetilde{u})$ (gray) of Proposition \ref{prop:conj722}. Note that the sector in red does not meet $\mathbb{D}_{\text{s}} \odot (-\widetilde{u},\widetilde{u})$ but is a subset of its closure.}
    \label{fig:conj722}
\end{figure}
\begin{acknowledgements}\label{ackref}
The author would like
to thank Pierre Mathonet for numerous helpful suggestions and Naïm Zénaïdi for fruitful discussions.
\end{acknowledgements}

\bibliography{references.bib}

@inproceedings{miniCAD,
author = {Michel, L. and Mathonet, P. and Zena\"{\i}di, N.},
title = {On {M}inimal and {M}inimum {C}ylindrical {A}lgebraic {D}ecompositions},
year = {2024},
isbn = {9798400706967},
publisher = {Association for Computing Machinery},
address = {New York, NY, USA},
url = {https://doi.org/10.1145/3666000.3669704},
doi = {10.1145/3666000.3669704},
booktitle = {Proceedings of the 2024 International Symposium on Symbolic and Algebraic Computation},
pages = {316-323},
numpages = {8},
keywords = {Semi-algebraic set, abstract reduction system, cylindrical algebraic decomposition, minimal and minimum element, partially ordered set},
location = {Raleigh, NC, USA},
series = {ISSAC '24}
}

@book{Basu,
  title={Algorithms in Real Algebraic Geometry},
  author={Basu, S. and Pollack, R. and Roy, M.-F.},
  isbn={9783540330998},
  lccn={2006927110},
  series={Algorithms and Computation in Mathematics},
  url={https://books.google.be/books?id=8Hyp1SLis4IC},
  year={2007},
  publisher={Springer Berlin Heidelberg}
}

@book{bochnaketal1998,
  author    = {Bochnak, J. and Coste, M. and Roy, M.-F.},
  publisher = {Springer},
  title     = {Real Algebraic Geometry},
  year      = {1998},
doi = {10.1007/978-3-662-03718-8}
}

@phdthesis{locatelli,
  author  ={Locatelli, A. F.},
  title = {On the regularity of Cylindrical Algebraic Decompositions},
  school = {University of Bath},
  year = {2016},
 type = {PhD Thesis}
  }

@article{lazard2010,
author       = {Lazard, D.},
title        = {{CAD} and Topology of Semi-Algebraic Sets},
journal      = {Math. Comput. Sci.},
volume       = {4},
number       = {1},
pages        = {93--112},
year         = {2010},
doi          = {10.1007/S11786-010-0047-0}
}

@article{DLSregular,
author = {Davenport, J. H. and Locatelli, A. F. and Sankaran, G. K.},
title = {Regular Cylindrical Algebraic Decomposition},
journal = {Journal of the London Mathematical Society},
volume = {101},
number = {1},
pages = {43-59},
keywords = {14P10 (primary), 57N99, 68W30 (secondary)},
doi = {https://doi.org/10.1112/jlms.12257},
url = {https://londmathsoc.onlinelibrary.wiley.com/doi/abs/10.1112/jlms.12257},
eprint = {https://londmathsoc.onlinelibrary.wiley.com/doi/pdf/10.1112/jlms.12257},
abstract = {Abstract We show that a strong well-based cylindrical algebraic decomposition P of a bounded semi-algebraic set S is a regular cell decomposition, in any dimension and independently of the method by which P is constructed. Being well-based is a global condition on P that holds for the output of many widely used algorithms. We also show the same for S of dimension at most 3 and P a strong cylindrical algebraic decomposition that is locally boundary simply connected: this is a purely local extra condition.},
year = {2020}
}

@article{piano,
title = {On the “piano movers” problem. II. {G}eneral techniques for computing topological properties of real algebraic manifolds},
journal = {Advances in Applied Mathematics},
volume = {4},
number = {3},
pages = {298-351},
year = {1983},
issn = {0196-8858},
doi = {https://doi.org/10.1016/0196-8858(83)90014-3},
url = {https://www.sciencedirect.com/science/article/pii/0196885883900143},
author = {Schwartz, J. T. and Sharir, M.},
abstract = {This paper continues the discussion, begun in J. Schwartz and M. Sharir [Comm. Pure Appl. Math., in press], of the following problem, which arises in robotics: Given a collection of bodies B, which may be hinged, i.e., may allow internal motion around various joints, and given a region bounded by a collection of polyhedral or other simple walls, decide whether or not there exists a continuous motion connecting two given positions and orientations of the whole collection of bodies. We show that this problem can be handled by appropriate refinements of methods introduced by A. Tarski [“A Decision Method for Elementary Algebra and Geometry,” 2nd ed., Univ. of Calif. Press, Berkeley, 1951] and G. Collins [in “Second GI Conference on Automata Theory and Formal Languages,” Lecture Notes in Computer Science, Vol. 33, pp. 134–183, Springer-Verlag, Berlin, 1975], which lead to algorithms for this problem which are polynomial in the geometric complexity of the problem for each fixed number of degrees of freedom (but exponential in the number of degrees of freedom). Our method, which is also related to a technique outlined by J. Reif [in “Proceedings, 20th Symposium on the Foundations of Computer Science,” pp. 421–427, 1979], also gives a general (but not polynomial time) procedure for calculating all of the homology groups of an arbitrary real algebraic variety. Various algorithmic issues concerning computations with algebraic numbers, which are required in the algorithms presented in this paper, are also reviewed.}
}

@article{binyamini2019complex,
  title={Complex cellular structures},
  author={Binyamini, G. and Novikov, D.},
  journal={Annals of Mathematics},
  volume={190},
  number={1},
  pages={145--248},
  year={2019},
  publisher={JSTOR}
}

@InProceedings{Arnon,
author="Arnon, D. S.
and Collins, G. E.
and McCallum, S.",
editor="Caviness, B. F.
and Johnson, J. R.",
title="Cylindrical {A}lgebraic {D}ecomposition {I}: {T}he {B}asic {A}lgorithm",
booktitle="Quantifier Elimination and Cylindrical Algebraic Decomposition",
year="1998",
publisher="Springer Vienna",
address="Vienna",
pages="136--151",
abstract="Given a set of r-variate integral polynomials, a cylindrical algebraic decomposition (cad) of euclidean r-space Erpartitions Erinto connected subsets compatible with the zeros of the polynomials. By ``compatible with the zeros of the polynomials'' we mean that on each subset of Er, each of the polynomials either vanishes everywhere or nowhere. For example, consider the bivariate polynomial{\$}{\$}{\{}y^4{\}} - 2{\{}y^3{\}} + {\{}y^2{\}} - 3{\{}x^2{\}}y + 2{\{}x^4{\}}.{\$}{\$}",
isbn="978-3-7091-9459-1",
doi = "10.1007/978-3-7091-9459-1 \_6"
}

@ARTICLE{collins1975,
  title={Quantifier elimination for real closed fields by cylindrical algebraic decomposition},
  author={Collins, G. E.},
  journal={Lecture Notes in Computer Science},
  year={1975},
doi = "10.1007/3-540-07407-4\_17"
}

@article{ARNON1988,
title = {A {C}luster-{B}ased {C}ylindrical {A}lgebraic {D}ecomposition {A}lgorithm},
journal = {Journal of Symbolic Computation},
volume = {5},
number = {1},
pages = {189-212},
year = {1988},
issn = {0747-7171},
doi = {https://doi.org/10.1016/S0747-7171(88)80012-9},
url = {https://www.sciencedirect.com/science/article/pii/S0747717188800129},
author = {D. S. Arnon},
abstract = {Let A ⊂ Z [x1,000, xr be a finite set. An A-invariant cylindrical algebraic decomposition (cad) is a certain partitlon of r-dlmensional euclidean space Er into semi-algebralc cells such that the value of each Ai ∈ A has constant sign (positive, negative, or zero) throug|umt each cell. Two cells are adjacent if their union is connected. Recently a number of mathoda have been given for augmenting Colllns' cad construction algorithm (1975), so that in addition to specifying the cell~ that comprise a cad, it identifies the pairs of adjacent cells. Assuming the availability of such an adjacency algorithm, in this paper we give a modified cad construction algorithm based on the utillzatloa of clusters of cells in a cad (a cluster is a collection of cells whose union is connected). Preliminary observations indicate that the 11ew algorithm can be significantly more efficient in some cases than the original, although in other examples it is somewhat less efficient.}
}

@phdthesis{baker,
  author  ={Baker, H.},
  title = {Cylindrical Algebraic Decompositions With Monotone Cell},
  school = {University of Bath},
  year = {2025},
 type = {PhD Thesis}
  }

@phdthesis{arnon-thesis,
  author  ={Arnon, D. S.},
  title = {Algorithms for the Geometry of Semi-Algebraic Sets},
  school = {University of Wisconsin-Madison},
  year = {1981},
 type = {PhD Thesis}
  }

\affiliationone{
   Lucas Michel\\
   \mbox{Department of Mathematics, University of Liège,}\\
   \mbox{Allée de la découverte 12, 4000 Liège, Belgium}
   \email{lucas.michel@uliege.be}}
\end{document}